\newcommand{\bburl}[1]{\textcolor{blue}{\url{#1}}}
\newtheorem{theorem}{Theorem}[section]
\newtheorem{lemma}[theorem]{Lemma}
\newcommand\be{\begin{equation}}
\newcommand\ee{\end{equation}}
\newcommand\ben{\begin{enumerate}}
\newcommand\een{\end{enumerate}}
\subjclass[2010]{11P99, 11Y16, \and 11P70}
\keywords{Sum dominated sets, computational complexity}
\thanks{The second author thanks S.J. Miller for comments regarding a preliminary version of this paper.}
\begin{document}
\title{On the computational complexity of MSTD sets}
\author{Tanuj Mathur and Tian An Wong}

\begin{abstract}
We outline a general algorithm for verifying whether a subset of the integers is a more sum than differences (MSTD) set, also known as sum dominated sets, and give estimates on its computational complexity. We conclude with some numerical results on large MSTD sets and  MSTD subsets of $[1,N]\cap \mathbb Z$ for $N$ up to 160.
\end{abstract}

\maketitle
\section{Introduction}

Let $A$ be a finite set. Then define the sum set
\be
A+A = \{a + b : a,b \in A\}
\ee
and difference set
\be
A-A = \{a - b : a,b \in A\}
\ee
Then the sets with the property that
\be
|A-A|<|A+A|,
\ee
where $|A|$ represents the cardinality of the set, are defined as more sum than difference sets, also known as MSTD sets or sum-dominated sets. 

Since addition is commutative while subtraction is not, two distinct elements generate one sum but two differences. So, intuitively, the number of elements of $A-A$ should at least be as much as the number of elements in $A+A$. In other words, one expects that 
\be
\label{MDTS}
|A-A| \geq |A+A|
\ee
In 1960s, the first counterexample to this was provided by Conway, namely, the set $A =\{0,2,3,4,7,11,12,14\}$ is an MSTD set. Conway conjectured that for $A\subset \mathbb Z$, $A$ satisfies \eqref{MDTS}, but was proven to be false by Marica \cite{mar}. Since then, MSTD sets have been the subject of much study, particularly by Nathanson \cite{N1,N2}, Hegarty \cite{H1,H2}, S.J. Miller and collaborators \cite{W10,W12,W13,W14,W15,W17,W18}, Zhao \cite{other3,other2}, and others \cite{M,R,other}.

In this note, we study the computational complexity of testing MSTD sets. We shall outline the general procedure for verifying whether a set $A\subset\mathbb Z$ is an MSTD set, then give estimates for its computational complexity. Finally, we conclude with some numerical results on large MSTD sets $A$ with $40\le |A| \le 120$,  and MSTD subsets of $[1,N]\cap \mathbb Z$ for $N=1,\dots, 40$. We hope that this will inspire further research into possible computational studies and cryptographic applications of MSTD sets.

\section{Computational verification}

Due to the randomness in the occurrence of the MSTD sets, our algorithm to verify them is simply by brute force calculation. (There is no literature claiming a more efficient way to do it). Let $A = \{a_1,\dots,a_n\}\subset \mathbb Z$. Then our algorithm is as follows:

\begin{enumerate}
\item{For all $j \geq i$, add $a_i$ to $a_j$. Each of these operations will generate an element for $A+A$. Store the results in $A+A$.}
\item{For all $j \geq i$, subtract $a_i$ from $a_j$. Each of these operations will generate an element for $A-A$. Store the results in $A-A$.}
\item{Eliminate the duplicate elements.}
\item{Count and store the number of elements of $A+A$ and $A-A$ i.e. $|A+A|$ and $|A-A|$.}
\item{Compare $|A+A|$ and $|A-A|$.}
\item{If $|A+A|$ $>$ $|A-A|$, then the given set is an MSTD set.}
\end{enumerate}

We next calculate the time complexity of the verification. To show that verification of MSTD sets is complex enough for practical purposes, we can just show that a lower bound of the time complexity is complex enough. All the operations in the verification of an MSTD set are:
\begin{enumerate}
\item{Addition of integers.}
\item{Subtraction of integers.}
\item{Elimination of duplicate elements of the two generated lists, i.e., iterating through all the elements of the lists and comparing them.}
\item{Counting the number of elements of the two lists.}
\item{Comparing two integers.}
\end{enumerate}
To calculate a lower bound, we shall only consider the first two operations.

\begin{lemma}
\label{add}
The lower bound of addition operations in verification of an MSTD set is 
\be
O\left( \frac{n(n+1)}{2}\log_2(k)\right)
\ee
where $n = |A|$ and $k = \inf(A)$.
\end{lemma}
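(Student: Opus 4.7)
The plan is to factor the total work of step 1 of the verification algorithm as a product of (number of additions) and (bit-cost per addition). First I would count the index pairs $(i,j)$ with $1 \le i \le j \le n$ enumerated in step 1; direct counting gives $\binom{n}{2} + n = n(n+1)/2$, with $\binom{n}{2}$ coming from the strictly-above-diagonal pairs and $n$ from the diagonal $i = j$. Each such pair triggers exactly one integer addition, so the algorithm performs $n(n+1)/2$ additions in total.

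Next I would estimate the bit-cost of a single addition in the standard binary model of computation. Since every $a_i$ satisfies $|a_i| \ge k$, each operand requires at least $\lceil \log_2 k \rceil$ bits to encode, and the schoolbook binary addition of two such integers uses $\Omega(\log_2 k)$ bit operations. Multiplying this per-addition cost by the $n(n+1)/2$ pairs enumerated above yields the stated order of growth, completing the count.

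The main subtlety I anticipate is the slightly unusual pairing of big-$O$ notation with the phrase ``lower bound'': what the argument actually produces is $\Omega(n(n+1)/2 \log_2 k)$ bit operations, and I would read the $O$ in the statement as describing the order of growth of this lower bound itself. Beyond this interpretive point I do not expect a genuine obstacle, since each step is a textbook counting or bit-complexity calculation and no structural input from the set $A$ is used. It is, however, worth noting two small hypotheses: one should assume $k \ge 2$ so that $\log_2 k > 0$, and, if $A$ may contain non-positive integers, $k$ should be interpreted via $\max_i |a_i|$ (or $|\inf A|$) so that the bit-length estimate remains meaningful.
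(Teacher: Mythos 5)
Your proposal matches the paper's own argument: both count the $n(n+1)/2$ pairs $(i,j)$ with $i\le j$ and multiply by a per-addition bit-cost of order $\log_2 k$, justified by every operand being at least $k$. Your added remarks — that the bound is really an $\Omega(\cdot)$ statement, that one needs $k\ge 2$, and that $k$ should be read as $|\inf A|$ when $A$ contains non-positive elements — are sensible clarifications of points the paper leaves implicit, but the substance of the proof is the same.
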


\begin{proof}
Let $k$ be the infimum of $A$, that is, the element $k \in A$ such that $a_i \geq k $ for all $a_i \in A$.  Note that the time complexity of any addition operation between two elements of $A$ is 
\be
O(\log_2(\sup_{i,j}(a_i,a_j))),
\ee
which is greater than $O(\log_2(k))$.

The number of addition operations can be counted as follows: first count $a_1 + a_i$ for $i=1,\dots,n$, then $a_2 + a_i$ for $i=2,\dots,n$ and so on until we reach $a_n+a_i$ for $i=n$. Therefore, the total number of addition operations is 
\be
\sum_{i=1}^{n} {i} = \frac{n(n+1)}{2}.
\ee
Thus taking together the complexity of any arbitrary operation with the time complexity of all addition operations it follows that
\be
O\left(\frac{n(n+1)}{2}\log_2(k)\right)
\ee
is a lower bound of the time complexity of addition operations in the verification of an MSTD set.
\end{proof}

\begin{lemma}
\label{sub}
The lower bound of subtraction operations in verification of an MSTD set is  
\be
O({n(n-1)}\log_2(k))
\ee
where $n = |A|$ and $k = \inf(A)$.
\end{lemma}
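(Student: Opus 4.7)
My plan is to follow the structure of Lemma \ref{add}, splitting the argument into the per-operation bit cost and the total count of subtractions. First, I would observe that subtracting any two elements $a_i, a_j \in A$ has bit complexity $O(\log_2(\sup_{i,j}(a_i,a_j)))$, which is bounded below by $O(\log_2(k))$ since $k = \inf(A)$ is the smallest element of $A$. This supplies the logarithmic factor verbatim from the additive case.

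Next, I would count the number of subtractions the algorithm must perform. The key distinction from Lemma \ref{add} is that subtraction is \emph{not} commutative: for $i \neq j$, the differences $a_j - a_i$ and $a_i - a_j$ are generally distinct elements of $A-A$, so both must be computed to generate the full difference set. This forces the algorithm to iterate over all ordered pairs $(i,j)$ with $i \neq j$, of which there are exactly $n(n-1)$. The $n$ diagonal pairs $a_i - a_i = 0$ contribute only an additional $O(n)$ trivial operations, which is absorbed into the dominant term.

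Multiplying the per-operation bit cost by the total count then yields the claimed lower bound $O(n(n-1)\log_2(k))$. The only real observation needed is the non-commutativity of subtraction, which essentially doubles the count of ordered pairs relative to the unordered count $n(n+1)/2$ used for addition; the per-operation bit-length bound is a routine consequence of standard integer arithmetic, so I do not anticipate any substantive obstacle in carrying out the argument.
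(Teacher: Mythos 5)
Your proposal is correct and follows essentially the same route as the paper: the same per-operation bit-cost bound $O(\log_2(k))$, and the same count of $n(n-1)$ ordered pairs $(i,j)$ with $i\neq j$ arising from the non-commutativity of subtraction, with the diagonal $a_i-a_i=0$ excluded. No substantive difference.
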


\begin{proof}
The proof is similar to that of the previous lemma. The time complexity of any subtraction operation between two elements of $A$ is
\be 
O\Big(\log_2(\sup_{i,j}(a_i,a_j))\Big)
\ee
which is greater than $O(\log_2(k))$. The number of subtraction operations can be counted as follows: for each $a_i, i = 1,\dots,n$, we count the number of $a_i - a_j$ for $j\neq i$. Note that we know that $a_i-a_i$ = 0 is an element, we don't have to perform that operation. So, the total number of subtraction operations is ${n(n-1)}$. Thus it follows that the time complexity of all subtraction operations is at least 
\be
O({n(n-1)}\log_2(k))
\ee
as desired.
\end{proof}

\begin{theorem}
The time complexity of the verification method is bounded below by $O(n^2\log_2(k))$ where $n = |A|$ and $k = \inf(A)$.
\end{theorem}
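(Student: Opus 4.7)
The plan is to combine the two preceding lemmas directly. Since the verification algorithm performs at least all the additions counted in Lemma \ref{add} and all the subtractions counted in Lemma \ref{sub}, the total number of elementary bit operations it performs is bounded below by the sum of the two lower bounds. So my first step would be to write
\[
T(n,k) \;\geq\; \frac{n(n+1)}{2}\log_2(k) \;+\; n(n-1)\log_2(k).
\]

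Next, I would simplify the coefficient of $\log_2(k)$. Factoring out $n$ gives $n\bigl(\tfrac{n+1}{2}+(n-1)\bigr)\log_2(k) = n\cdot\tfrac{3n-1}{2}\log_2(k)$, which is clearly $\Theta(n^2)$. Hence the sum is bounded below by a constant multiple of $n^2\log_2(k)$, giving the desired asymptotic lower bound $O(n^2\log_2(k))$ (read here in the Knuth sense of a lower estimate on running time, consistent with the statements of the two lemmas).

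The argument is essentially a one-line addition of the two bounds from the lemmas, so there is no substantive obstacle; the only thing to be careful about is that the two lemmas bound disjoint sets of operations in the algorithm (the additions in step (1) and the subtractions in step (2)), so their contributions may legitimately be added without double-counting. Since steps (3)–(5) only contribute further positive terms, ignoring them preserves the lower bound, matching the remark preceding Lemma \ref{add} that restricting attention to additions and subtractions suffices.
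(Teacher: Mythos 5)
Your proposal is correct and follows essentially the same route as the paper: add the lower bounds from Lemma \ref{add} and Lemma \ref{sub} to get a $\tfrac{3n^2-n}{2}\log_2(k)$ term and conclude the $n^2\log_2(k)$ bound. Your explicit remark that the two lemmas count disjoint sets of operations (so the bounds may be added without double-counting) is a small clarification the paper leaves implicit, but the argument is otherwise identical.
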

\begin{proof}
From Lemma \ref{add} and Lemma \ref{sub}, we get the bound 
\be
O\left(\frac{n(n+1)}{2} \log_2(k)\right)+O\left({n(n-1)} \log_2(k)\right).
\ee
Then putting these together we get the estimate
\begin{align}
&O\left( \frac{n(n+1)}{2} \log_2(k)+{n(n-1)} \log_2(k)\right) \\
&= O{\left(\log_2(k)\frac{3n^2-n}{2}\right)} \\
&= O({n^2\log_2(k)})
\end{align}
as desired.
\end{proof}

\section{Numerical results on large MSTDs}
In this final section we list three numerical results on MSTDs of large cardinality. First, in the list below, we explicitly computed the following MSTDs of large cardinality. To our knowledge, no explicit MSTD of cardinality $\ge40$ has been studied in the literature. Second, in Figure \ref{plot}, we plot the probability that a given subset of an $n$-set is an MSTD set. Thirdly, in Table \ref{table} we list the cardinality of the largest of all MSTD subsets for each $n$ was computed for the sets $A = \{0,1,2,3,\dots,N-1\}$ for $1\le N \le 40$.

{
\begin{enumerate}
\item $
A_1=\{0, 1, 2, 4, 7, 8, 10, 11, 14, 15, 16, 18, 20, 22, 23, 24, 25, 29, 30, 31,33,\\  
39, 40, 41, 43, 45, 46, 48, 51, 52, 53, 61, 63, 66, 70, 71, 76, 77, 78,82, 83, 84,\\  
86, 88, 89, 91, 92, 96, 98, 99\} \\
|A| = 50 $
\item $
A = \{2, 4, 5, 9, 11, 13, 14, 16, 17, 19, 20, 21, 22, 23, 24, 25, 26, 28, 30, 31, 33,\\ 
34, 36, 37, 38, 40, 41, 42, 43, 44, 46, 47, 49, 52, 55, 56, 57, 61, 65, 66, 69, 70, 71,\\
71, 72, 73, 76, 77, 80, 82, 83, 85, 86, 87, 88, 89, 92, 95, 96, 97, 99\} \\
|A| = 60 $
\\
\item $
A = \{0, 1, 3, 7, 9, 10, 13, 14, 16, 17, 18, 19, 22, 23, 24, 25, 26, 27, 28, 29, 30, 31,\\
32, 34, 35, 38, 39, 40, 41, 42, 43, 45, 46, 47, 48, 49, 54, 55, 56, 57, 58, 59, 60, 61,\\
64,65, 67, 68, 69, 70, 71, 73, 75, 76, 78, 79, 80, 81, 82, 83, 85, 87, 89, 90, 91, 92,\\
95, 97, 98, 99\} \\
|A| = 70 $
\\

\item $
A = \{0, 2, 3, 7, 8, 9, 10, 11, 13, 16, 17, 19, 20, 22, 23, 25, 26, 27, 29, 30, 31, 32,\\
33, 34, 35, 36, 37, 38, 39, 41, 43, 44, 45, 46, 47, 48, 49, 50, 51, 52, 56, 57, 59,60, \\
62,63, 64, 65, 66, 67, 68, 69, 70, 71, 72, 73, 74, 75, 76, 77, 78, 80, 81, 82, 83, 87,\\ 
 88, 89, 90,  91, 92, 94, 97, 98, 99\} \\
|A| = 75 $
\\

\item $
A = \{0, 1, 3, 7, 8, 9, 10, 11, 12, 13, 14, 15, 16, 17, 18, 19, 20, 21, 22, 23, 24, 25,\\
26, 27, 29, 30, 31, 32, 34, 35, 36, 37, 38, 39, 40, 42, 43, 44, 45, 46, 47, 48, 49,50,\\
51, 52, 53, 54, 55, 56, 58, 59, 60, 61, 63, 64, 65, 67, 69, 70, 71, 73, 74, 75, 76, 77,\\
 80, 82, 83, 84, 85, 86, 87, 89, 91, 92, 95, 97, 98, 99\} \\
|A| = 80 $
\\

\item $
A = \{0, 1, 2, 4, 7, 9, 10, 11, 12, 13, 14, 15, 16, 17, 19, 20, 23, 25, 26, 27, 28, 29,\\
30, 31, 32, 33, 35, 36, 37, 38, 39, 40, 41, 42, 43, 44, 45, 46, 47, 49, 50, 51, 52,54,\\
55, 56, 57, 58, 59, 60, 61, 62, 63, 64, 65, 66, 67, 70, 71, 72, 73, 74, 75, 76,  77, 78,\\
79, 83, 84, 85, 87, 90, 92, 93, 95, 96, 97, 98, 99, 100, 101, 102, 106, 108, 109\} \\
|A| = 85 $
\\

\item $
A = \{0, 2, 3, 7, 8, 9, 11, 12, 13, 14, 15, 16, 18, 19, 21, 23, 24, 25, 26, 27, 30,33,\\ 
 34, 35, 36, 37, 38, 39, 41, 42, 43, 44, 45, 46, 48, 49, 50, 51, 53, 55, 57, 58, 59, 60, \\ 
 61, 62, 63, 64, 65, 66, 67, 68, 69, 70, 71, 72, 73, 74, 75, 76, 77, 78, 79, 80, 82, 83,\\ 
 85, 86, 87, 88, 89, 90, 91, 92, 93, 94, 95, 96, 97, 98, 99, 100, 102, 105, 106, 107, \\ 
 109,112, 113, 114\} \\
|A| = 90 $
\\

\item $
A = \{1, 2, 3, 6, 8, 9, 10, 11, 12, 13, 14, 15, 16, 17, 20, 22, 23, 24, 25, 26, 27, 28, 29, \\ 
32, 33, 35, 36, 37, 39, 41, 42, 43, 44, 45, 46, 47, 48, 49, 50, 51, 53, 54, 55, 56, 57, \\ 
58, 59, 60, 61, 62, 64, 65, 66, 67, 69, 70, 71, 72, 73, 74, 76, 77, 78, 79, 80, 81, 82, \\ 
 84, 85, 86, 87, 88, 89, 90, 92, 93, 94, 95, 96, 98, 100, 101, 102, 103, 104, 105, 106,\\ 107, 108, 110, 111, 112, 116, 117, 119\} \\
|A| = 95 $
\\

\item $
A = \{0, 1, 3, 7, 8, 9, 10, 11, 12, 13, 15, 16, 19, 20, 23, 24, 25, 26, 27, 28, 29, 30,\\
 31, 32, 33, 34, 35, 36, 37, 38, 39, 41, 42, 43, 44, 45, 46, 47, 49, 51, 52, 53, 54, 55,\\
 56, 57, 59, 60, 61, 62, 63, 65, 66, 67, 68, 70, 71, 72, 73, 75, 76, 77, 78, 79, 80, 81,\\ 
82, 83, 84, 85, 86, 87, 88, 89, 91, 92, 93, 94, 95, 96, 97, 98, 99, 101, 103, 104, 105,\\ 106, 107, 108, 109, 111, 112, 113, 114,115, 118, 120, 121, 122\} \\
|A| = 100 $
\\

\item $
A = \{0, 1, 3, 7, 9, 10, 12, 13, 14, 16, 18, 20, 22, 23, 24, 26, 27, 28, 29, 30, 31, 32, 34, \\ 
35, 36, 37, 38, 39, 40, 41, 43, 44, 45, 46, 47, 48, 50, 51, 52, 53, 54, 56, 57, 59, 60, 61,\\ 
62, 63, 64, 65, 67, 68, 69, 70, 73, 74, 75, 76, 77, 78, 79, 81, 82, 83, 84, 85, 86, 88, 89, \\
90, 93, 94, 95, 96, 97, 99, 100, 101, 103, 104, 105, 106, 107, 108, 109, 112, 113, 115,\\ 116, 117, 119, 120, 123, 124, 125, 126, 127, 128, 129, 131, 132, 135, 137, 138, 139\} \\
|A| = 105 $
\\

\item $
A = \{0, 2, 3, 4, 9, 10, 11, 12, 13, 14, 15, 17, 18, 20, 21, 23, 24, 25, 26, 27, 28, 29, 31,\\
 32, 33, 34, 35, 36, 37, 38, 39, 40, 41, 42, 43, 44, 45, 47, 48, 50, 51, 52, 53, 54, 56, 58,\\
61, 62, 63, 64, 65, 66, 67, 68, 70, 71, 72, 73, 74, 75, 76, 78, 79, 81, 82, 83, 84, 85, 86, \\
87, 88, 89, 90, 91, 93, 94, 96, 98, 99, 100, 101, 103, 104, 105, 106, 107, 108, 109, 110,\\
111, 112, 113, 115, 116, 117, 118, 119, 120, 122, 124, 126, 127, 128, 129, 130, 132,\\ 
136, 137, 138, 139\} \\
|A| = 110 $
\\

\item $
A = \{2, 3, 4, 7, 9, 10, 11, 12, 13, 14, 15, 16, 17, 18, 19, 20, 21, 22, 23, 24, 25, 26, 27, 28,\\ 
30, 33, 34, 35, 36, 39, 40, 42, 43, 44, 45, 46, 47, 48, 50, 51, 52, 54, 55, 56, 57, 58, 59,60, \\
61, 62, 64, 65, 67, 68, 69, 70, 71, 72, 73, 74, 75, 76, 77, 78, 79, 80, 81, 82, 83, 84, 85, 86,\\
87, 88, 89, 90, 91, 92, 93, 94, 95, 97, 98, 99, 101, 102, 103, 104, 107, 109, 110, 111, 112,\\
113, 114, 116, 119, 120, 121, 122, 123, 125, 126, 127, 128, 129, 130, 131, 132, 135, 136,\\137, 141, 142, 144\} \\
|A| = 115 $
\\

\item $
A = \{0, 1, 2, 5, 7, 8, 11, 12, 13, 14, 15, 16, 17, 19, 20, 21, 22, 23, 24, 25, 26, 27, 28, 29,\\
30, 31, 32, 33, 34, 37, 38, 39, 40, 41, 42, 43, 44, 45, 47, 48, 49, 50, 51, 52, 53, 54, 55, 56,\\
57, 59, 60, 61, 62, 63, 65, 66, 68, 71, 72, 73, 74, 75, 76, 78, 79, 80, 81, 82, 83, 84, 85, 86, \\
87, 88, 91, 92, 93, 94, 95, 96, 97, 98, 99, 101, 102, 104, 105, 106, 109, 110, 112, 113,\\
114, 115, 116, 117, 118, 119, 120, 121, 122, 123, 124, 125, 127, 129, 130, 131, 132, 133,\\ 
134, 136, 137, 138, 140, 141,142, 146, 147, 149\} \\
|A| = 120 $
\end{enumerate}
}

\begin{figure}[h!]
	\caption{Probability of a set being an MSTD versus $n$}
		\includegraphics[width=4.5in]{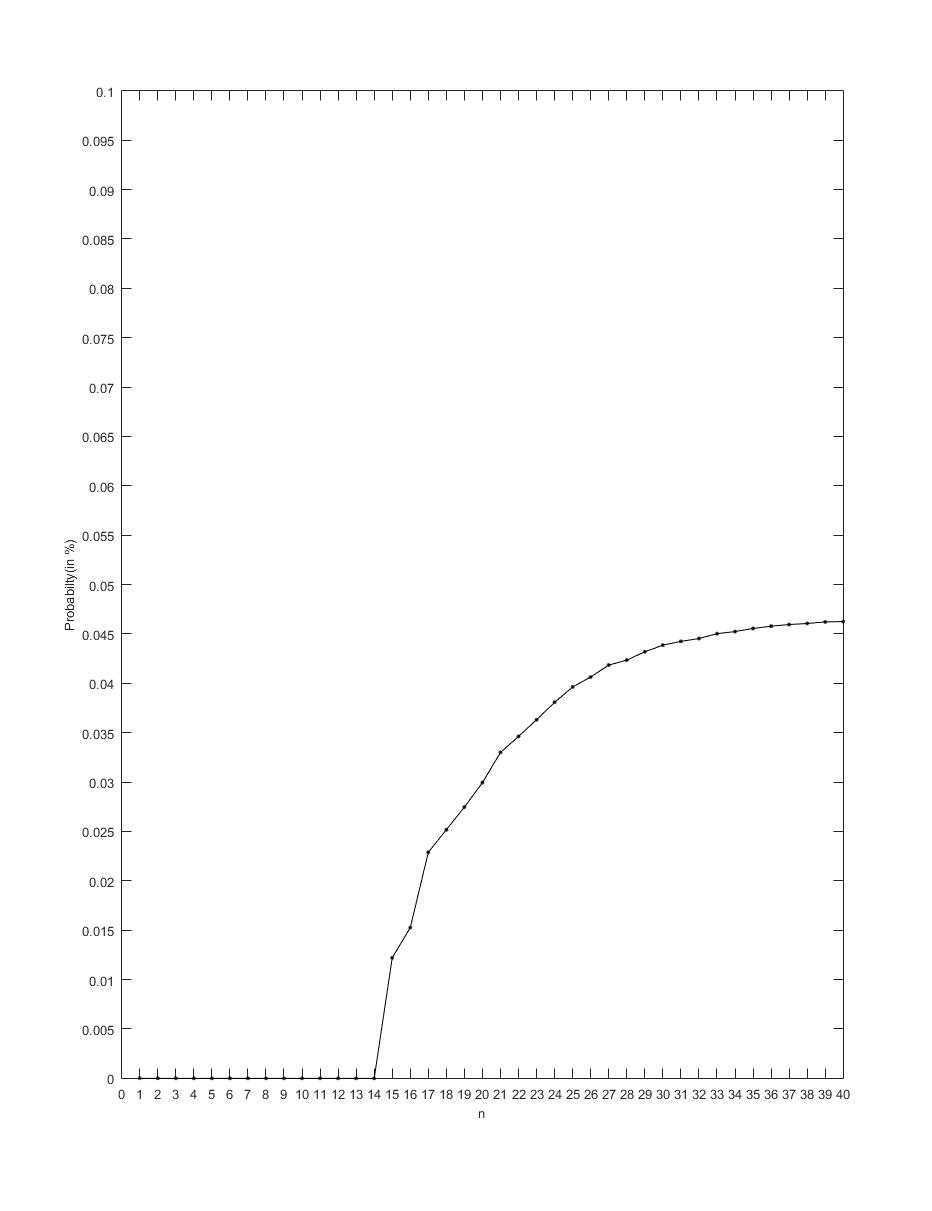}
	\label{plot}
\end{figure}


\begin{table}[h]
\caption{Probability of a set being an MSTD versus $N$}\label{table}
\begin{center}
\begin{tabular}{ |c|c| } 
 \hline
 $N$ & Cardinality of the largest subset \\ 
 \hline
 1 & NA \\ 
 \hline
 2 & NA \\ 
 \hline
 3 & NA \\ 
 \hline
 4 & NA \\ 
 \hline
 5 & NA\\
 \hline
 6 & NA \\ 
 \hline
 7 & NA \\ 
 \hline
 8 & NA \\ 
 \hline 
 9 & NA\\ 
 \hline
 10 & NA \\ 
 \hline
 11 & NA \\ 
 \hline
 12 & NA\\ 
 \hline
 13 & NA \\ 
 \hline
 14 & NA\\ 
 \hline
 15 & 9 \\ 
 \hline
 16 & 9 \\ 
 \hline
 17 & 10\\ 
 \hline
 18 & 11 \\ 
 \hline
 19 & 12\\ 
 \hline
 20 & 13 \\ 
 \hline
 21 & 14\\ 
 \hline
 22 & 15 \\ 
 \hline
 23 & 16\\ 
 \hline
 24 & 17\\ 
 \hline
 25 & 18 \\ 
 \hline
 26 & 19 \\ 
 \hline
  27 & 20 \\ 
 \hline
 28 & 21 \\ 
 \hline
 29 & 22 \\ 
 \hline
 30 & 23\\ 
 \hline
 31 & 24 \\ 
 \hline
 32 & 25\\ 

 \hline
 33 & 26\\ 
 \hline
   34 & 27 \\ 
 \hline
  35 & 28\\ 
 \hline
 36 & 29 \\ 
 \hline
 37 & 30\\ 
 \hline 
 38 & 31 \\ 
 \hline
 39 & 32\\ 
 \hline
 40 & 33 \\
 \hline
 41 & 34 \\
 \hline
  42 & 35 \\
 \hline
  43 & 36 \\
 \hline
  44 & 37 \\
 \hline
  45 & 38 \\
 \hline
  46 & 39 \\
 \hline
  47 & 40 \\
 \hline
\end{tabular}
\end{center}
\end{table}

\clearpage
\bibliographystyle{alpha}
\bibliography{MSTD}
\end{document}